\documentclass[a4paper,10pt]{article}
\usepackage{latexsym,amssymb,amsfonts,amsmath,amsthm,amstext,color,graphicx,times}
\usepackage[mathscr]{euscript}
\usepackage{indentfirst}
\usepackage{cite}
\usepackage{mathrsfs}
\usepackage{subcaption}

\newcommand{\R}{\mathbb{R}}
\newcommand{\Q}{\mathbb{Q}}
\newcommand{\N}{\mathbb{N}}

\usepackage{wrapfig}
\usepackage[all]{xy}
\usepackage{tikz}
\usepackage{hyperref}
\usepackage{animate}
\usetikzlibrary{patterns}

\theoremstyle{plain}
\newtheorem{theorem}{Theorem}[section]
\newtheorem{lemma}[theorem]{Lemma}
\newtheorem{proposition}[theorem]{Proposition}

\theoremstyle{definition}

\newtheorem{example}[theorem]{Example}

\theoremstyle{remark}
\newtheorem{remark}[theorem]{Remark}

\DeclareMathOperator{\gra}{gra}

\newcommand{\tos}{\rightrightarrows} 
\DeclareMathOperator*{\argmin}{arg\,min}

\DeclareMathOperator*{\INT}{int}

\DeclareMathOperator*{\conv}{conv}

\DeclareMathOperator*{\nep}{NEP}

\DeclareMathOperator*{\gnep}{GNEP}

\DeclareMathOperator{\linear}{span}
\newcommand{\inner}[2]{\langle #1,#2 \rangle}
%

\title{A note on  coupled constraint Nash games}

\author{Orestes Bueno\thanks{Universidad del Pac\'ifico.  Lima, Per\'u. Email: \texttt{\{o.buenotangoa, cotrina\_je\}@up.edu.pe}} 
\and Carlos Calderon\thanks{Instituto de Matem\'atica y Ciencias Afines. Email: \texttt{carlos.calderon@imca.edu.pe}}
\and John Cotrina\footnotemark[1]}

\begin{document}

\maketitle

\begin{abstract}
In this note we are interested in a relevant generalized Nash equilibrium problem, which was proposed by Rosen in 1965.  
An existence result is established in the general setting of quasiconvexity, which is independent from the one given by Aussel and Dutta in 2008.

\bigskip

\noindent{\bf Keywords: }
	Generalized Nash Equilibrium; Generalized Convexity; Variational Inequalities

\bigskip

\noindent{{\bf MSC (2010)}: 49J53, 91A10, 91B50 }
\end{abstract}
\section{Introduction}
Let $N$ be a finite and non-empty set, representing a set of players.
Let us assume that each player $\nu \in N$ chooses a strategy $x^\nu $ in a strategy set $K_{\nu}$ of  $\R^{n_\nu}$.Consider the sets
\[
K=\prod_{\nu\in N} K_\nu,\quad \R^n=\prod_{\nu\in N}  \R^{n_\nu},\quad  K_{-\nu}=\prod_{\mu\in N\setminus\{\nu\}} K_\mu,\quad \R^{-n_\nu}=\prod_{\mu\in N\setminus\{\nu\}} \R^{n_\mu}
\]
Also, we can write $x = (x^\nu, x^{-\nu}) \in K$ in order to emphasize the strategy of player $\nu$, $x^\nu\in K_\nu$, and the strategy of the rival players $x^{-\nu}\in K_{-\nu}$.

For each player $\nu$, given the rivals' strategy $x^{-\nu}$, the player $\nu$ chooses a strategy $x^\nu$ such that it solves the following optimization problem  
\begin{equation}\label{NEP}
	\textstyle{\nep_{\nu}}(x^{-\nu})\::\:\displaystyle{\min_{K_\nu} \theta_\nu(\cdot,x^{-\nu})} ,
\end{equation}
where $\theta_\nu: \R^n\to\R$ and  $\theta_\nu(x^\nu,x^{-\nu})$ denotes the \emph{loss} that player $\nu$ suffers when his own strategy is $x^\nu$ and the rivals' strategies are $x^{-\nu}$.  Thus, a \emph{Nash equilibrium}~\cite{Nash}, associated to the loss functions $(\theta_\nu)_\nu$ and the constraint sets $(K_\nu)_\nu$, is a vector $\hat{x}\in K$ such that $\hat{x}^\nu$  solves $\nep_\nu(\hat x^{-\nu})$, for any $\nu$.

It is well known~\cite[Theorem 1]{Dansupta} in the literature that a Nash equilibrium exists when, for each $\nu\in N$,
\begin{itemize}
	\item the set $K_\nu$ is compact convex and non-empty,
	\item the function $\theta_\nu$ is continuous and is quasiconvex on its player's variable.
\end{itemize}
Since then, many authors extended this result in different ways, see for instance~\cite{Dansupta,Tian95,Reny,Rabia,Banks-Duggan}, where continuity assumption is relaxed, or \cite{Kazuo,PQK-NHQ-19}, where the quasiconvexity is weakened.

On the other hand, in a Generalized Nash Equilibrium Problem (GNEP), each player $\nu$ posseses a \emph{constraint map} $\mathfrak X_\nu:K_{-\nu}\tos K_{\nu}$, so their strategy must belong to the set $\mathfrak X_\nu(x^{-\nu})\subset K_\nu$, where the sets $\mathfrak X_\nu(x^{-\nu})$ clearly depend on the rival players' strategies. The aim of player $\nu$, given the rival players' strategies $x^{-\nu}$, is to choose a strategy $x^\nu$ that solves the minimization problem
\begin{equation}\label{GNEP}
	\textstyle{\gnep_{\nu}}(x^{-\nu})\::\:\displaystyle{\min_{\mathfrak X_\nu(x^{-\nu})} \theta_\nu(\cdot,x^{-\nu})}.
\end{equation}
A \emph{Generalized Nash Equilibrium} is a vector $\hat{x}\in K$ such that $\hat{x}^\nu$ solves $\gnep_\nu(\hat x^{-\nu})$, for any $\nu$.
Arrow and Debreu~\cite[Lemma 2.5]{Arrow-Debreu} proved the existence of Generalized Nash Equilibria when for all $\nu \in N$,
\begin{itemize}
	\item the set $K_\nu$ is compact convex and non-empty,
	\item the function $\theta_\nu$ is continuous and quasiconvex on its player's variable,
	\item the map $\mathfrak X_\nu$ is lower semicontinuous with closed graph and convex and non-empty values.
\end{itemize}
Clearly this existence result generalizes the first one, by considering $\mathfrak X_\nu$ as the constant map $\mathfrak X_{\nu}(x^{-\nu})=K_\nu$.  Since then, many authors dealt with this kind of problems, see for instance~\cite{HARKER199181,Tian92,Tian95,Cubiotti,ASV-2016,AGM-16,JC-JZ-PS,Au-Su,JC-JZ,JC-AS,Cotrina2020ExistenceOQ}. 

An important instance of GNEPs was presented by Rosen in~\cite{Rosen}, which is known as the \emph{jointly convex GNEP}. 
A jointly convex GNEP is a GNEP in which the graph of each map constraint $\mathfrak X_\nu$ is a certain convex, compact and non-empty subset of $\R^n$, denoted by $\mathfrak X$. In other words
\[
\mathfrak X_\nu(x^{-\nu})=\{x^\nu\in\R^{n_\nu}\::\:(x^\nu,x^{-\nu})\in \mathfrak X\},
\]
When this is the case, $K_\nu$ is the projection of $\mathfrak X$ onto $\R^{n_\nu}$. This kind of GNEP have recently begun to gain more and more attention as it models problems from economics such as electricity markets, environmental games and bilateral exchange of bads, see~\cite{CKK-2004,K-2005,CK-2015,K-2016,vardar2019}.

On the other hand, jointly convex GNEPs can
be reformulated as variational inequalities~\cite{FACCHINEI2007159} when assuming pseudoconvexity and differentiability. An extension of this reformulation was presented by Aussel and Dutta in \cite{Aussel-Dutta}, using normal cones, under quasiconvexity and continuity assumptions. However, their main result concerning the existence of equilibria was established under semi strict quasiconvexity.

In this note, our main result consists to assume only quasiconvexity in order to show the existence of equilibria for the jointly convex GNEP. In Section~\ref{definitions} we recall some definitions of generalized convexity, continuity for functions and continuity for set-valued maps. Additionally some preliminary results are established concerning closedness and lower semicontinuity for special maps. Finally, in Section~\ref{main-section} we reformulate jointly convex GNEPs as variational inequality problems and show our main result.

\section{Definitions, Notations and Preliminary Results}\label{definitions}
Let $U,V$ be non-empty sets. A \emph{multivalued} or \emph{set-valued} map $T:U\tos V$ is an application $T:U\to \mathcal{P}(V)$, that is, for $u\in U$, $T(u)\subset V$. 
The  \emph{graph} of $T$ is defined as
\begin{gather*}
	\gra(T)=\big\{(u,v)\in U\times V\::\: v\in T(u)\big\}.
\end{gather*}

Let $X,Y$ be two real Banach spaces. Recall that a set-valued map $T:X\tos Y$ is
\begin{itemize}
	\item  \emph{lower semicontinuous} (lsc) at $x_0\in X$ when, for any open set $V$ such $T(x_0)\cap V\neq\emptyset$,  there exists a neighborhood $U$ of $x_0$ such that $T(x)\cap V\neq\emptyset$, for every $x\in U$;
	\item \emph{upper semicontinuous} (usc) at $x_0\in X$ when, for any neighborhood $V$ of $T(x_0)$, there exists a neighborhood $U$ of $x_0$ such that $T(x)\subset V$, for all $x\in U$;
	\item  \emph{lower} (respectively, \emph{upper}) \emph{semicontinuous}  when it is lower (resp. upper) semicontinous at every $x_0\in X$;
	\item \emph{closed} when its graph is a closed subset of $X\times Y$.
\end{itemize}
Moreover, $T:X\tos Y$ is lower semicontinuous at $x_0$ if, and only if, $T$ is \emph{sequentially lower semicontinuous} at $x_0$, that is, for any sequence $(x_n)_{n\in\N}$ of $X$ converging to $x_0$ and any $y_0\in T(x_0)$, there exists a sequence $(y_n)_{n\in\N}$ of $Y$ converging to $y_0$ such that $y_n\in T(x_n)$, for any $n$. The interested reader can find a proof of this fact in~\cite[Proposition 2.5.6]{VMPOS}.

Let $X$ be a Banach space, $X^*$ its topological dual and $\langle\cdot,\cdot\rangle$ the duality pairing. Given $A$ a subset of $X$, the \emph{closure}, the \emph{interior} and the \emph{convex hull} of $A$ are denoted by $\bar{A}$, $\INT(A)$ and $\conv(A)$, respectively. 
In addition, $-A$ denotes the set $-A=\{x\in X\::\: -x\in A\}$.
The \emph{polar cone} of $A$ is the set 
\[
A^-:=\{x^*\in X^*\::\: \inner{x^*}{x}\leq 0,\,\forall\,x\in A\},
\]
which is a convex cone on $X^*$.
The \emph{normal cone} $N_A$ of $A$ at $x\in X$ is defined as
\[
N_{A}(x):=\{x^*\in X^*\::\: \inner{x^*}{y-x}\leq 0,\,\forall\, y\in A\}.
\]
We will consider $N_A(x)=X^*$, whenever $A=\emptyset$. It is common in this definition to impose $A$ convex and $x\in A$, however, we won't consider such conditions in this work. See~\cite{Bueno-Cotrina} for another instance where such conditions are not considered.
Note that, for all $x\in X$,  
\[
N_A(x)=(A-x)^-=N_{\bar{A}}(x)=N_{\conv(A)}(x).
\]

The following lemma will be used in the proof of Proposition~\ref{Prop1}. 
\begin{lemma}\label{L0}
	Let $A$ be a subset of a real Banach space $X$. If $\INT(A)\neq\emptyset$, then $A^-$ is a pointed cone, i.e. $A^-\cap -A^-=\{0\}$. Moreover, if $x \in \INT(A)$ and $x^*\in A^-$, $x^*\neq 0$, then $\inner{x^*}{x}< 0$.
\end{lemma}
\begin{proof}
	Note that $A^-\cap -A^-=A^{\perp}=\{x^*\in X^*\::\: \inner{x}{x^*}=0,\,\forall\,x\in A\}$. Since $\INT(A)\neq \emptyset$, $\linear(A)=X$, so
	\[
	A^-\cap -A^- = A^{\perp} = \linear(A)^{\perp} = X^{\perp}=\{0\}.
	\]
	To conclude the proof, let $x\in \INT(A)$, $x^*\in A^-$, $x^*\neq 0$ and assume that $\inner{x}{x^*}\geq 0$, hence $\inner{x}{x^*}=0$. Since $x\in\INT(A)$, for some $\varepsilon>0$ and all $v\in B(0,1)$, $x+\varepsilon v\in A$, which implies $\inner{x+\varepsilon v}{x^*}\leq 0$. Therefore $\inner{v}{x^*}\leq 0$, for all $v\in B(0,1)$, which in turn implies that $\inner{v}{x^*}=0$, for all $v\in B(0,1)$. This is a contradiction, since $x^*\neq 0$. 
\end{proof}


Let us now recall some classical definitions of generalized convexity. Given a Banach space $X$, a real-valued function $h:X\to\R$ is
\begin{itemize}
	\item \emph{quasiconvex} if, for any $x,y\in X$ and $t\in[0,1]$,
	\[
	h(tx+(1-t)y)\leq \max\{h(x),h(y)\};
	\]
	\item \emph{semistrictly quasiconvex} if it is quasiconvex and, 
	for any $x,y\in X$ and $t\in]0,1[$, 
	\[
	h(x)\neq h(y)\quad \Rightarrow \quad h(tx+(1-t)y)< \max\{h(x),h(y)\}.
	\]
	
\end{itemize}
Our definition of semistrictly quasiconvex function was taken from~\cite[Chapter~5]{MR3236910}, and it is equivalent to the definition of pseudoconvex function given in~\cite[Definition~3.1]{SN10}. Some authors may not include the quasiconvex condition in this definition, see for instance~\cite[Chapter~4]{MR3236910}. 

It is well known that a function $h$ is quasiconvex if, and only if, the \emph{sublevel sets} $S_{h,\lambda}=\{y\in X\::\:h(y)\leq\lambda\}$ are convex, for all $\lambda\in\R$, and also if and only if the \emph{strict sublevel sets} $S^<_{h,\lambda}=\{y\in X\::\:h(y)<\lambda\}$ are convex, for all $\lambda\in\R$.

Let $X$ be a Banach space. Given $x\in X$, $\mathscr{V}_x$ will denote an open neighbourhood of $x$ in $X$. A function $h:X\to\R$ is called: 
\begin{itemize}
	\item \emph{lower semicontinuous}, if for all $x\in X$ and $\lambda\in\R$ with $\lambda <h(x)$, there exists $\mathscr{V}_x$ such that $\lambda < h(x')$, for all $x'\in \mathscr{V}_x$;
	\item \emph{upper semicontinuous}, if for all $x\in X$ and $\lambda\in\R$ with $\lambda >h(x)$, there exists $\mathscr{V}_x$ such that $\lambda > h(x')$, for all $x'\in \mathscr{V}_x$; 
	\item \emph{lower pseudocontinuous} \cite{MORGAN2007},  if for all $x,y\in X$ with $h(y)<h(x)$, there exists $\mathscr V_x$ such that $h(y)<h(x')$, for all $x'\in V_x$;
	\item \emph{upper pseudocontinuous},  if for all $x,y\in X$ with $h(y)>h(x)$, there exists $\mathscr V_x$ such that $h(y)>h(x')$, for all $x'\in V_x$.
\end{itemize}
It is not difficult to prove that if $h$ is lower semicontinuous then it is lower pseudocontinuous. On the other hand,  it is well known that $h$ is lower semicontinuous if, and only if, $S_{h,\lambda}$ is closed, for all $\lambda\in\R$. In the same way, considering for  $x\in X$  the sets $S_{h}(x):=S_{h,h(x)}$, $S^<_{h}(x):=S^<_{h,h(x)}$,  $h$ is lower pseudocontinuous if, and only if, $S_h(x)$ is closed, for all $x\in X$. Also, $h$ is upper pseudocontinuous if, and only if, $S^<_h(x)$ is open, for all $x\in X$.


Let $X$, $Y$ be two real Banach spaces and $f:X\times Y\to\R$ be a  function. We define the following set-valued maps  $L^<_f,L_f:X\times Y\tos X$ as
\begin{equation}\label{Op-L}
	L^<_f(x,y):=S^<_{f(\cdot,y)}(x)\quad\text{and}\quad L_f(x,y):=S_{f(\cdot,y)}(x).
\end{equation}
and the set-valued maps  $\mathscr{N}_f^<,\mathscr{N}_f:X\times Y\to X^*$
\begin{equation}\label{Op-N}
	\mathscr{N}_f^<(x,y):=N_{L^<_f(x,y)}(x)\quad\text{and}\quad\mathscr{N}_f(x,y):=N_{L_f(x,y)}(x).
\end{equation}

It is important to note that $L^<_f(x,y)$ is not merely the projection onto $X$ of $S^<_f(x,y)$. Consider for instance the function $f:\R\times\R\to\R$, $f(x,y)=x^2+y^2$, and take $(0,1)$. It is not difficult to see that 
$L_f^<(0,1)=\emptyset$, but the projection of $S^<_f(0,1)$ onto $\R$ is the interval $]-1,1[$. 

In general, we have the following relation between $L_f^<(x,y)$ and $S_f^<(x,y)$:
\[
L^<_f(x,y)\times\{y\}= S^<_{f(x,y)}\cap (X\times \{y\}).
\]

The following result establishes the lower semicontinuity of $L^<_f$, which is inspired by \cite{quasiconvex}.
\begin{lemma}\label{L1}
	Let $X, Y$ be two real Banach spaces and $f:X\times Y\to\R$ be a function. If $f$ is lower pseudocontinuous on its first argument and continuous on its second one, then the map $L^<_f$ is lower semicontinuous.
\end{lemma}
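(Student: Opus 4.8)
The plan is to use the sequential characterization of lower semicontinuity recalled in this section. Fix a point $(x_0,y_0)\in X\times Y$, a sequence $(x_n,y_n)_{n\in\N}$ in $X\times Y$ converging to $(x_0,y_0)$, and an element $z_0\in L^<_f(x_0,y_0)$; by definition this means $f(z_0,y_0)<f(x_0,y_0)$, and the goal is to produce a sequence $(z_n)_{n\in\N}$ with $z_n\to z_0$ and $z_n\in L^<_f(x_n,y_n)$, i.e. $f(z_n,y_n)<f(x_n,y_n)$, for every $n$. The natural first attempt is the constant choice $z_n\equiv z_0$, which reduces the statement to showing $f(z_0,y_n)<f(x_n,y_n)$ for all sufficiently large $n$.

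I would try to reach this inequality in two moves. First, move the second variable: since $f(z_0,\cdot)$ and $f(x_0,\cdot)$ are continuous at $y_0$ and $f(z_0,y_0)<f(x_0,y_0)$, one can fix $\lambda\in\R$ with $f(z_0,y_0)<\lambda<f(x_0,y_0)$ and obtain $n_0$ such that $f(z_0,y_n)<\lambda<f(x_0,y_n)$ for every $n\ge n_0$; in particular $z_0\in L^<_f(x_0,y_n)$ for $n\ge n_0$. Second, move the first variable: for each such $n$, the inequality $f(z_0,y_n)<f(x_0,y_n)$ together with lower pseudocontinuity of $f(\cdot,y_n)$ at $x_0$ yields a neighbourhood of $x_0$ on which $f(z_0,y_n)<f(\cdot,y_n)$ holds; if $x_n$ lies in this neighbourhood, then $f(z_0,y_n)<f(x_n,y_n)$ and we are done with $z_n=z_0$.

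The step I expect to be the real obstacle is making the second move uniform: a priori the neighbourhood of $x_0$ furnished by lower pseudocontinuity of $f(\cdot,y_n)$ depends on $n$, and it could shrink faster than the sequence $(x_n)$ approaches $x_0$, so membership $x_n\in$ (that neighbourhood) is not automatic. To control this I would work with the set form of lower pseudocontinuity recalled above — namely that $S_{f(\cdot,y)}(x)$ is closed for every $x$ — and try to combine the closedness of the sublevel sets $S_{f(\cdot,y_n)}(x_0)=\{x:f(x,y_n)\le f(x_0,y_n)\}$ with the continuity in the second argument so as to extract a single neighbourhood of $x_0$ valid for all large $n$; equivalently, one may reverse the order, first invoking lower pseudocontinuity of $f(\cdot,y_0)$ at $x_0$ to get a fixed neighbourhood $U$ with $f(z_0,y_0)<f(x',y_0)$ for all $x'\in U$ (so that $x_n\in U$ eventually) and only then using continuity of each $f(x_n,\cdot)$ at $y_0$ — but here too the modulus of continuity of $f(x_n,\cdot)$ is allowed to depend on $n$, so the two rates must be reconciled carefully. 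This interplay between lower pseudocontinuity in the first variable and continuity in the second is, I expect, the technical heart of the argument, and is presumably where the idea borrowed from~\cite{quasiconvex} enters.
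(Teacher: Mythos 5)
Your proposal is not a complete proof: it correctly sets up the sequential criterion and carries out the first move (using continuity of $f(z_0,\cdot)$ and $f(x_0,\cdot)$ to get $f(z_0,y_n)<\lambda<f(x_0,y_n)$ for large $n$), but the decisive step --- passing from $f(x_0,y_n)$ to $f(x_n,y_n)$ so as to conclude $z_0\in L^<_f(x_n,y_n)$ --- is only described as a difficulty whose ``two rates must be reconciled carefully'', and is never actually carried out. The paper does not reconcile those rates either; it takes a different route. Arguing by contradiction from the negation of lower semicontinuity, it produces witnesses $x_n\in B(x_0,1/n)$ and $y_m\in B(y_0,1/m)$ indexed \emph{independently}, with $f(z,y_m)\ge f(x_n,y_m)$ for all $z\in V$; it then lets $m\to\infty$ with $x_n$ fixed (continuity in the second argument) to get $f(z,y_0)\ge f(x_n,y_0)$, and only afterwards lets $n\to\infty$, using the closedness of $S_{f(\cdot,y_0)}(z)$ coming from lower pseudocontinuity in the first argument, to reach $f(z,y_0)\ge f(x_0,y_0)$ for every $z\in V$, contradicting $L^<_f(x_0,y_0)\cap V\ne\emptyset$. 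The two limits are taken one variable at a time, which is exactly what your single diagonal sequence $(x_n,y_n)$ does not permit.

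Moreover, the obstacle you flag is substantive, not merely technical: under separate continuity alone the diagonal step genuinely fails, so no choice of $z_n\to z_0$ (constant or not) can work in general. For instance, with $f(x,y)=-|x|-\chi(x,y)$ where $\chi(x,y)=\tfrac{1}{|y|}\bigl(1-\tfrac{2|x-y|}{|y|}\bigr)_+$ for $y\ne0$ and $\chi(x,0)=0$ (a spike of height $1/|y|$ centred at $x=y$ with width $|y|$), $f$ is continuous in each variable separately, yet along $(x_n,y_n)=(1/n,1/n)\to(0,0)$ one has $f(x_n,y_n)=-1/n-n\to-\infty$ while $f(z,y_n)=-|z|$ for all $z\in\,]1,2[\,$, so $L^<_f(x_n,y_n)$ misses the open set $V=\,]1,2[\,$ even though $L^<_f(0,0)\cap V\ne\emptyset$. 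This shows your approach cannot be completed under the stated hypotheses, and it also indicates that the reduction in the paper's proof --- taking the non-lsc witnesses with the first coordinate independent of the index $m$ --- is itself a step requiring justification (or a strengthening of the continuity hypothesis to joint continuity), since the negation of lower semicontinuity only yields coupled pairs $(x_{n,m},y_{n,m})$.
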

\begin{proof} 
	Assume, on the contrary, that $L^<_f$ is not lower semicontinuous at some $(x_0,y_0)\in X\times Y$. Then, there exists an open set $V\subset X$ with $L^<_f(x_0,y_0)\cap V\neq \emptyset$ and, for every $n,m\in\N$, there exist $x_n\in B(x_0,1/n)$, $y_m\in B(y_0,1/m)$, such that 
	\[
	L^<_f(x_n,y_m)\cap V=\emptyset. 
	\]
	This means that $f(z,y_m)\geq f(x_n,y_m)$, for all $z\in V$ and $n,m\in\N$. Since $f$ is continuous on its second argument, taking limit when $m\to\infty$, we obtain $f(z,y_0)\geq f(x_n,y_0)$, that is, $x_n\in L_f(z,y_0)$, for all $n\in\N$. We now take limit when $n\to\infty$ to conclude $x_0\in L_f(z,y_0)$, since $L_f(z,y_0)$ is closed, as $f$ is lower pseudocontinuous on its first argument. Therefore $f(x_0,y_0)\leq f(z,y_0)$, for all $z\in V$. The lemma follows by observing that, $f(z_0,y_0)<f(x_0,y_0)$, for some $z_0\in V$.
\end{proof}

\begin{lemma}\label{L2}
	Let $X, Y$ be two real Banach spaces and $T:X\times Y\tos X$ be a set-valued map.
	If $T$ is lower semicontinuous, then the set-valued map $\mathcal{N}_T:X\times Y\tos X^*$ defined as
	\[
	\mathcal{N}_T(x,y):=N_{T(x,y)}(x)
	\]
	is closed.
\end{lemma}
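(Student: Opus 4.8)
The plan is to verify directly that $\gra(\mathcal{N}_T)$ is sequentially closed in $(X\times Y)\times X^*$, which is the natural way to argue here since the lower semicontinuity of $T$ that we may use is equivalent to its sequential version, as recalled above. Accordingly, let $(x_n,y_n,x_n^*)_{n\in\N}$ be a sequence in $\gra(\mathcal{N}_T)$ with $x_n\to x$, $y_n\to y$ and $x_n^*\to x^*$; by definition $x_n^*\in N_{T(x_n,y_n)}(x_n)$ for every $n$, and the goal is to show that $x^*\in N_{T(x,y)}(x)$.

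If $T(x,y)=\emptyset$, then $N_{T(x,y)}(x)=X^*$ by convention and there is nothing to prove. Otherwise, fix an arbitrary $z\in T(x,y)$: this is the only place where lower semicontinuity is needed. Since $T$ is sequentially lower semicontinuous at $(x,y)$ and $(x_n,y_n)\to(x,y)$, there is a sequence $(z_n)_{n\in\N}$ with $z_n\in T(x_n,y_n)$ for every $n$ and $z_n\to z$. As $x_n^*\in N_{T(x_n,y_n)}(x_n)$ and $z_n\in T(x_n,y_n)$, the definition of the normal cone gives
\[
\inner{x_n^*}{z_n-x_n}\leq 0,\qquad n\in\N.
\]

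Passing to the limit, $z_n-x_n\to z-x$ in $X$ and $x_n^*\to x^*$ in $X^*$, so $\inner{x_n^*}{z_n-x_n}\to\inner{x^*}{z-x}$, whence $\inner{x^*}{z-x}\leq0$. As $z\in T(x,y)$ was arbitrary, $x^*\in N_{T(x,y)}(x)$, and this proves that $\gra(\mathcal{N}_T)$ is closed. I do not expect a substantive obstacle; the only step deserving care is the convergence of the pairings, which for the norm topology on $X^*$ is immediate from the estimate $|\inner{x_n^*}{z_n-x_n}-\inner{x^*}{z-x}|\leq\|x_n^*\|\,\|(z_n-x_n)-(z-x)\|+|\inner{x_n^*-x^*}{z-x}|$ (the first factor being bounded since $x_n^*\to x^*$), and which continues to hold if $X^*$ is given the weak$^{*}$ topology, because then $(x_n^*)$ is norm bounded by the Banach--Steinhaus theorem. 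The remaining minor point is simply to keep in mind the convention $N_\emptyset=X^*$, which is exactly what makes the statement valid without any hypothesis on the values of $T$.
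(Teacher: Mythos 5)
Your proof is correct and follows essentially the same route as the paper's: reduce to sequential closedness, dispose of the case $T(x,y)=\emptyset$ via the convention $N_\emptyset=X^*$, and for nonempty values use sequential lower semicontinuity to approximate an arbitrary $z\in T(x,y)$ by $z_n\in T(x_n,y_n)$ and pass to the limit in $\inner{x_n^*}{z_n-x_n}\leq 0$. Your extra remarks on the convergence of the pairings and the weak$^*$ case are fine but not needed for the statement as given.
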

\begin{proof}
	Let $(x_n,y_n,x_n^*)_{n\in\N}$ be a sequence in the graph of $\mathcal{N}_T$ converging to $(x_0,y_0,x_0^*)$. We aim to show that $x_0^*\in \mathcal{N}_{T(x_0,y_0)}(y_0)$. If $T(x_0,y_0)=\emptyset$, there is nothing to prove. Now, assume that $T(x_0,y_0)\neq\emptyset$ and take any $x\in T(x_0,y_0)$. By lower semicontinuity of $T$ there exists a sequence $(z_n)_{n\in\N}$ converging to $x$ such that $z_n\in T(x_n,y_n)$, for all $n\in\N$. Therefore
	\[
	\inner{x_n^*}{z_n-x_n}\leq0.
	\]
	The lemma follows by letting $n$ tend to $\infty$, to obtain $\inner{x_0^*}{x-x_0} \leq0$. 
\end{proof}

\begin{proposition}\label{Prop1}
	Let $X, Y$ be two real Banach spaces and $f:X\times Y\to\R$ be a function. 
	\begin{enumerate}
		\item If $f$ is lower pseudocontinuous on its first argument and continuous on its second one, then the map $\mathscr{N}^<_f$
		is closed.
		\item If $f$ is upper pseudocontinuous on its first argument then $L^<_f(x,y)$ is open, for all $(x,y)\in X\times Y$. In particular, if $w^*\in \mathscr N^<_f(x,y)$, $w^*\neq 0$, then $\inner{w^*}{z-x}<0$, for all $z\in L^<_f(x,y)$.
		\item If $f$ is upper pseudocontinuous and quasiconvex, both on its first argument, then $\mathscr{N}^<_f(x,y)\neq\{0\}$, for all $(x,y)\in X\times Y$.
		
	\end{enumerate}
\end{proposition}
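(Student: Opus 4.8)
The plan is to obtain all three items from the machinery already in place; only item~(3) requires a genuinely new ingredient, namely a separation theorem. For item~(1) I would simply combine Lemma~\ref{L1} and Lemma~\ref{L2}: the hypotheses on $f$ make $L^<_f$ lower semicontinuous by Lemma~\ref{L1}, and then Lemma~\ref{L2} applied to $T=L^<_f$ shows that $(x,y)\mapsto N_{L^<_f(x,y)}(x)=\mathscr{N}^<_f(x,y)$ is closed, with nothing further to check. For item~(2), recall that $L^<_f(x,y)=S^<_{f(\cdot,y)}(x)$ by definition and that upper pseudocontinuity of $f(\cdot,y)$ is equivalent to all its strict sublevel sets being open; this gives openness of $L^<_f(x,y)$ for every $(x,y)$ at once. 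For the ``in particular'' assertion, if $L^<_f(x,y)=\emptyset$ it is vacuous, so assume otherwise and set $A:=L^<_f(x,y)-x$, an open non-empty set with $A^-=N_{L^<_f(x,y)}(x)=\mathscr{N}^<_f(x,y)$; for $w^*\in\mathscr{N}^<_f(x,y)$ with $w^*\neq0$ and any $z\in L^<_f(x,y)$, the point $z-x$ lies in $\INT(A)$, so the ``moreover'' clause of Lemma~\ref{L0} yields $\inner{w^*}{z-x}<0$.

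For item~(3) I would distinguish two cases. If $L^<_f(x,y)=\emptyset$, then $\mathscr{N}^<_f(x,y)=N_\emptyset(x)=X^*$ by the adopted convention, which is not $\{0\}$ (tacitly assuming $X\neq\{0\}$). If $L^<_f(x,y)\neq\emptyset$, then it is open by item~(2), convex because the strict sublevel sets of the quasiconvex function $f(\cdot,y)$ are convex, and it does not contain $x$, as $f(x,y)<f(x,y)$ is impossible. Applying the Hahn--Banach separation theorem to the non-empty open convex set $L^<_f(x,y)$ and the point $x$ lying outside it produces a nonzero $w^*\in X^*$ with $\inner{w^*}{z}\le\inner{w^*}{x}$ for all $z\in L^<_f(x,y)$, that is, $w^*\in N_{L^<_f(x,y)}(x)=\mathscr{N}^<_f(x,y)$, so this cone is strictly bigger than $\{0\}$.

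The only real obstacle is this separation step in item~(3): extracting a nonzero functional requires the sublevel set to be open, which is exactly what upper pseudocontinuity secures through item~(2) --- if $x$ were allowed to sit on the boundary of a merely convex sublevel set, separation could fail in infinite dimensions. Everything else is routine bookkeeping with the conventions $N_\emptyset=X^*$ and $N_A(x)=(A-x)^-$ together with the elementary split on emptiness of $L^<_f(x,y)$; I would also flag, for completeness, the degenerate possibility $X=\{0\}$, which is implicitly excluded.
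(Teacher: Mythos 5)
Your proof is correct and follows essentially the same route as the paper: items (1) and (2) by combining Lemmas~\ref{L1}, \ref{L2} and \ref{L0} exactly as the authors do, and item (3) by the same split on emptiness of $L^<_f(x,y)$ followed by Hahn--Banach separation. Your one added touch --- spelling out that the openness of $L^<_f(x,y)$ from item (2) is what licenses a \emph{nonzero} separating functional in infinite dimensions --- is a point the paper leaves implicit (cf.\ Remark~\ref{rem:finite}), and is worth keeping.
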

\begin{proof}
	\begin{enumerate}
		\item Closedness of $\mathscr{N}^<_f$ follows from  Lemmas~\ref{L1} and~\ref{L2}, and the fact that
		$\mathscr{N}^<_f(x,y)=\mathcal{N}_{L^<_f}(x,y)$, for any $(x,y)\in X\times Y$.
		\item The set $L^<_f(x,y)$ is open, since $L^<_f(x,y)=S^<_{f(\cdot,y)}(x)$ and $f(\cdot,y)$ is upper pseudocontinuous. The second claim follows from the fact that $\mathscr N^<_f(x,y)=(L^<_f(x,y)-x)^-$ and Lemma~\ref{L0}.
		\item Take $(x,y)\in X\times Y$. If $x\in \argmin_X f(\cdot,y)$ then $L^<_f(x,y)=\emptyset$ and the claim trivially follows. Now assume that $x\notin \argmin_X f(\cdot,y)$. The quasiconvexity of $f(\cdot,y)$ implies $S^<_{f(\cdot,y)}(x)=L^<_{f}(x,y)$ is non-empty and convex. Moreover $x\notin L^<_f(x,y)$ so, using the Hahn-Banach separation theorem, there exists $x^*\in X^*$, $x^*\neq 0$ such that
		\[
		\inner{x^*}{z}\leq \inner{x^*}{x},\quad \forall z\in L^<_f(x,y). 
		\]
		This implies that $x^*\in \mathscr{N}^<_f(x,y)$.\qedhere
	\end{enumerate}
\end{proof}
\begin{remark}\label{rem:finite}
	The pseudocontinuity condition on $f$ in item 3 can be dropped if $X$ is finite-dimensional.
\end{remark}

The following example shows that continuity and convexity only in the first argument of $f$ is not enough  to guarantee the closedness of $\mathscr{N}^<_f$, neither the lower semicontinuity of $L^<_f$.
\begin{example}\label{E1}
	Define $\Theta:\R\times\R\to\R$ as
	\[
	\Theta(x,y)=
	\begin{cases}
		x,&\text{when }y\neq 1,\\
		-x,&\text{when }y= 1.
	\end{cases}
	\] 
	It is not difficult to see that $\Theta$ is  continuous and convex with respect to its first argument. Moreover, for each $(x,y)\in\R^2$,
	\[
	L^<_\Theta(x,y)=
	\begin{cases}
		]-\infty,x[,&\text{when }y\neq 1,\\
		]x,+\infty[,&\text{when }y=1.
	\end{cases}
	\]
	Clearly the map $L^<_\Theta$ is not lower semicontinuous at $(0,1)$. 
	In addition, the strict normal operator associated to $\Theta$ is given by
	\[
	\mathscr{N}^<_\Theta(x,y)=
	\begin{cases}
		[0,+\infty[,&\text{when }y\neq 1\\
		]-\infty,0],&\text{when }y=1.
	\end{cases}
	\]
	which is not closed.
\end{example}

\section{GNEPs and Variational Inequality Problems}\label{main-section}
Throughout this section, we will consider a jointly convex GNEP, given by a convex set $\mathfrak{X}\subset\R^n$. We will also consider the loss functions $\theta_\nu$ as bifunctions from $\R^{n_\nu} \times \R^{-n_\nu}$ to $\R$.

For each $\nu\in N$ and $x\in\R^n$, consider the set~\cite{Aussel-Dutta,SN10}
\[
D_{\nu}(x)=\conv\displaystyle\left( \mathscr{N}^<_{\theta_\nu}(x^{\nu},x^{-\nu})\cap S_\nu[0,1]\right),
\]
where $S_\nu[0,1]$ is the unit sphere in $\R^{n_\nu}$. This allows us to define the set-valued map $T:\R^n\tos\R^n$ as
\begin{equation}\label{T}
	T(x):=\prod_{\nu\in N}D_\nu(x).
\end{equation}
Clearly, for all $x\in\R^n$, $T(x)$ is compact and convex, as each $D_\nu(x)$ is compact and convex aswell.
However, the operator $T$ may not be a closed operator, as shown by the following example.

\begin{example}
	Consider a two player NEP where each player $\nu\in\{1,2\}$ has a loss function defined as in Example~\ref{E1}, namely
	\[
	\theta_\nu(x^{\nu},x^{-\nu})=\Theta(x^{\nu},x^{-\nu})=
	\begin{cases}
		x^\nu,&\text{when }x^{-\nu}\neq 1,\\
		-x^\nu,&\text{when }x^{-\nu}=1.
	\end{cases}
	\]
	Therefore 
	\[
	\mathscr{N}^<_{\theta_\nu}(x)=
	\begin{cases}
		[0,+\infty[,&\text{when }x^{-\nu}\neq 1,\\
		]-\infty,0],&\text{when }x^{-\nu}= 1,
	\end{cases}
	\]
	and
	\[
	D_\nu(x)=
	\begin{cases}
		\{1\},&\text{when }x^{-\nu}\neq 1,\\
		\{-1\},&\text{when }x^{-\nu}=1,
	\end{cases}
	\]
	which is not closed, so neither is $T$. 
\end{example}

In view of the previous example, we need additional conditions to guarantee the closedness of $T$. 

\begin{proposition}\label{closed-T}
	Let $T:\R^n\tos\R^n$ be defined as in~\eqref{T}. 
	\begin{enumerate}
		\item If each loss function $\theta_\nu$ is lower pseudocontinuous on its own player's variable and continuous with respect to its rivals' variables, then $T$ is closed.
		\item If each loss function $\theta_\nu$ is quasiconvex with respect to its own player's variable then $T$ is non-empty valued.
	\end{enumerate}
\end{proposition}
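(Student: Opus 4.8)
The plan is to reduce both claims to the properties of the strict normal operators $\mathscr{N}^<_{\theta_\nu}$ already obtained in Proposition~\ref{Prop1}, combined with elementary stability of closedness and non-emptiness under intersection with the unit sphere, convex hull, and finite Cartesian product.

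For item 1, I would first apply Proposition~\ref{Prop1}(1): under the stated hypotheses each $\mathscr{N}^<_{\theta_\nu}$ is closed. Writing $G_\nu(x):=\mathscr{N}^<_{\theta_\nu}(x^\nu,x^{-\nu})\cap S_\nu[0,1]$ (identifying $\R^n$ with $\R^{n_\nu}\times\R^{-n_\nu}$), the map $G_\nu$ is closed, being the intersection of a closed map with the closed set $\R^n\times S_\nu[0,1]$; moreover $G_\nu$ takes values in the fixed compact set $S_\nu[0,1]$. The core step is to show that $D_\nu=\conv G_\nu$ is again closed. Given a sequence $(x_k,d_k)\to(x_0,d_0)$ with $d_k\in D_\nu(x_k)$, Carathéodory's theorem in $\R^{n_\nu}$ provides a representation $d_k=\sum_{i=1}^{n_\nu+1}\lambda_i^k u_i^k$ with $(\lambda_i^k)_i$ in the unit simplex and $u_i^k\in G_\nu(x_k)$. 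Passing to a subsequence along which $\lambda_i^k\to\lambda_i$ and $u_i^k\to u_i$ for each $i$ — possible since the simplex and $S_\nu[0,1]$ are compact — closedness of $G_\nu$ yields $u_i\in G_\nu(x_0)$, hence $d_0=\sum_i\lambda_i u_i\in\conv G_\nu(x_0)=D_\nu(x_0)$. Finally, since $\gra(T)=\bigcap_{\nu}\{(x,t):(x,t^\nu)\in\gra(D_\nu)\}$ and each of these sets is closed, $T=\prod_\nu D_\nu$ is closed.

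For item 2, fix $x\in\R^n$ and $\nu\in N$. Because the player's variable lives in the finite-dimensional space $\R^{n_\nu}$, Remark~\ref{rem:finite} lets me invoke Proposition~\ref{Prop1}(3) using only the quasiconvexity of $\theta_\nu(\cdot,x^{-\nu})$, obtaining $\mathscr{N}^<_{\theta_\nu}(x^\nu,x^{-\nu})\neq\{0\}$. Since this set is a cone, it contains a unit vector, so $\mathscr{N}^<_{\theta_\nu}(x^\nu,x^{-\nu})\cap S_\nu[0,1]\neq\emptyset$ and therefore $D_\nu(x)\neq\emptyset$. As this holds for every $\nu$, $T(x)=\prod_\nu D_\nu(x)$ is non-empty.

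I expect the only genuinely technical point to be the closedness of the convex-hull map $D_\nu=\conv G_\nu$; the rest is bookkeeping. The argument above handles it because we are in finite dimensions — so Carathéodory applies with a fixed number of terms — and because $G_\nu$ has values in the compact sphere $S_\nu[0,1]$, which supplies the compactness needed to extract convergent subsequences of the representing coefficients and vectors. In an infinite-dimensional setting this step would be delicate, but it does not arise here since each $\R^{n_\nu}$ is finite-dimensional.
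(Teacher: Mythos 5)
Your proof is correct and follows the same route as the paper: both items are reduced to Proposition~\ref{Prop1} (items 1 and 3, with Remark~\ref{rem:finite}), plus stability of closedness under products. The only difference is that you spell out, via Carath\'eodory's theorem and compactness of the simplex and of $S_\nu[0,1]$, why $D_\nu=\conv\bigl(\mathscr{N}^<_{\theta_\nu}\cap S_\nu[0,1]\bigr)$ inherits closedness --- a step the paper's proof asserts without detail --- so your write-up is, if anything, more complete.
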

\begin{proof}
	\begin{enumerate}
		\item From Proposition~\ref{Prop1}, item 1, we have that each map $\mathscr{N}^<_{\theta_\nu}$ is closed with convex values.
		This implies that the map $D_\nu$ is closed with convex values. The result follows from the fact that the Cartesian product of closed maps is a closed map.
		\item In view of Remark~\ref{rem:finite}, we can use Proposition~\ref{Prop1}, item 3, to conclude that the set $\mathscr{N}^<_{\theta_\nu}(x^\nu,x^{-\nu})\setminus\{0\}$ is non-empty, for all $(x^\nu,x^{-\nu})\in \R^n$.
		Thus, the map $D_\nu$ is non-empty valued and so is $T$.
	\end{enumerate}
\end{proof}



Let $S(T,\mathfrak X)$ be the solution set of the Variational Inequality Problem  associated to the map $T$ (given as in \eqref{T}) and the set $\mathfrak X$, namely
\[
S(T,\mathfrak X)=\{x\in \mathfrak X\::\:\exists\,w_0\in T(x),\,\langle w_0,y-x\rangle\geq 0,\,\forall\,y\in\mathfrak X\}
\]
The following result establishes a link between the jointly convex GNEP and its associated variational inequality problem. 
\begin{proposition}\label{P1}
	Assume that $\mathfrak{X}$ is non-empty. If every loss function $\theta_\nu$
	is upper pseudocontinuous on its own player's variable, then every point in $S(T,\mathfrak{X})$ is a generalized Nash equilibrium.
\end{proposition}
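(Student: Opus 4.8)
The plan is to argue by contradiction one player at a time. Fix $\hat{x}\in S(T,\mathfrak{X})$ together with a vector $w_0=(w_0^\nu)_{\nu\in N}\in T(\hat x)$ realizing the variational inequality, i.e. $\langle w_0,y-\hat x\rangle\geq 0$ for all $y\in\mathfrak X$, and fix a player $\nu$. Since $\hat x\in\mathfrak X$ we have $\hat x^\nu\in\mathfrak X_\nu(\hat x^{-\nu})$, so if $\hat x^\nu$ did \emph{not} solve $\gnep_\nu(\hat x^{-\nu})$ there would exist $z^\nu\in\mathfrak X_\nu(\hat x^{-\nu})$ with $\theta_\nu(z^\nu,\hat x^{-\nu})<\theta_\nu(\hat x^\nu,\hat x^{-\nu})$; equivalently $z^\nu\in L^<_{\theta_\nu}(\hat x^\nu,\hat x^{-\nu})$ (and in particular $z^\nu\neq\hat x^\nu$).

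Next I would unpack $w_0^\nu\in D_\nu(\hat x)=\conv\bigl(\mathscr{N}^<_{\theta_\nu}(\hat x^\nu,\hat x^{-\nu})\cap S_\nu[0,1]\bigr)$. Because $w_0^\nu$ exists, this convex hull is non-empty, so $w_0^\nu=\sum_{i=1}^{k}\lambda_i w_i^*$ is a finite convex combination with $\lambda_i>0$, $\sum_i\lambda_i=1$, and each $w_i^*$ a unit vector lying in $\mathscr{N}^<_{\theta_\nu}(\hat x^\nu,\hat x^{-\nu})$; in particular $w_i^*\neq 0$. This is exactly where the hypothesis is used: by Proposition~\ref{Prop1}(2), upper pseudocontinuity of $\theta_\nu$ in its first argument gives $\langle w_i^*,z-\hat x^\nu\rangle<0$ for every $z\in L^<_{\theta_\nu}(\hat x^\nu,\hat x^{-\nu})$. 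Applying this to $z=z^\nu$ and forming the convex combination yields $\langle w_0^\nu,z^\nu-\hat x^\nu\rangle=\sum_i\lambda_i\langle w_i^*,z^\nu-\hat x^\nu\rangle<0$.

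Finally I would invoke the jointly convex structure to close the loop. The vector $y:=(z^\nu,\hat x^{-\nu})$ belongs to $\mathfrak X$, since $z^\nu\in\mathfrak X_\nu(\hat x^{-\nu})$ means precisely $(z^\nu,\hat x^{-\nu})\in\mathfrak X$; moreover $y$ differs from $\hat x$ only in the $\nu$-th block, so $\langle w_0,y-\hat x\rangle=\langle w_0^\nu,z^\nu-\hat x^\nu\rangle$. Combining this with the variational inequality gives $0\leq\langle w_0,y-\hat x\rangle=\langle w_0^\nu,z^\nu-\hat x^\nu\rangle<0$, a contradiction. Hence $\hat x^\nu$ solves $\gnep_\nu(\hat x^{-\nu})$, and since $\nu$ was arbitrary, $\hat x$ is a generalized Nash equilibrium.

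The argument is short once the pieces are assembled; the only genuinely delicate point is that one needs the \emph{strict} inequality $\langle w_i^*,z^\nu-\hat x^\nu\rangle<0$ rather than merely the non-strict inequality built into the definition of the normal cone. With only $\langle w_i^*,z^\nu-\hat x^\nu\rangle\leq 0$ one would get $\langle w_0^\nu,z^\nu-\hat x^\nu\rangle\le 0$, which together with the variational inequality forces equality, not a contradiction. The strictness is supplied by Proposition~\ref{Prop1}(2) (itself relying on Lemma~\ref{L0} and the openness of $L^<_{\theta_\nu}(\hat x^\nu,\hat x^{-\nu})$), and this is the sole place where upper pseudocontinuity of $\theta_\nu$ in its own variable is needed.
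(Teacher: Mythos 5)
Your proposal is correct and follows essentially the same route as the paper: the same variational-inequality contradiction, the same use of the jointly convex structure to build $y=(z^\nu,\hat x^{-\nu})\in\mathfrak X$, and the same reliance on Proposition~\ref{Prop1}, item 2 (via Lemma~\ref{L0}) for the strict inequality. The only difference is organizational: you apply the strict inequality $\inner{w_i^*}{z^\nu-\hat x^\nu}<0$ directly to each unit vector in the convex-hull decomposition of $w_0^\nu$, which collapses the paper's two-step argument (first deducing $w_0^\nu=0$ from the non-strict inequality, then contradicting pointedness of the cone) into a single step --- a mild streamlining, not a different proof.
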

\begin{proof}
	Take $x\in S(T,\mathfrak X)$, so there exists $w_0\in T(x)$ such that
	\begin{align}\label{eq:prop33}
		\inner{w_0}{y-x}\geq 0,\quad\forall\,y\in \mathfrak X. 
	\end{align}
	Assume that $x$ is not a solution of the jointly convex GNEP. Then there exist a player $\nu$, and $z^{\nu}\in \mathfrak{X}_\nu(x^{-\nu})$, such that 
	\[
	\theta_\nu(z^{\nu},x^{-\nu})<\theta_\nu(x^{\nu},x^{-\nu}),
	\]
	that is $z^{\nu}\in L^<_{\theta_\nu}(x^{\nu},x^{-\nu})$ and $y=(z^\nu,x^{-\nu})\in\mathfrak{X}$.  In view of~\eqref{eq:prop33},
	\[
	\inner{w_0^{\nu}}{z^{\nu}-x^{\nu}}=\inner{w_0}{y-x}\geq 0.
	\]
	This in turn implies that $w_0^\nu=0$, by Proposition~\ref{Prop1}, item 2, and the fact that $w_0^{\nu}\in \mathscr{N}^<_{\theta_\nu}(x^{\nu},x^{-\nu})$.
	
	On the other hand, since $w_0^{\nu}\in D_{\nu}(x)$, there exist $w^{\nu}_1,\ldots,w^{\nu}_p\in \mathscr{N}^<_{\theta_\nu}(x^{\nu},x^{-\nu})\cap S_{\nu}[0,1]$, $t_1,\ldots,t_p\geq 0$, $\displaystyle\sum_{i=1}^pt_i=1$, such that $0=w_0^\nu=\displaystyle\sum_{i=1}^pt_iw_i^\nu$. Take $i_0$ such that $t_{i_0}>0$, so we have
	\[
	0=\sum_{i\neq i_0}t_iw_i^\nu+t_{i_0}w^\nu_{i_0}
	\]
	and this implies 
	\[
	-w_{i_0}^\nu=\sum_{i\neq i_0} \frac{t_i}{t_{i_0}}w_i^\nu.
	\]
	As $\mathscr{N}^<_{\theta_\nu}(x)$ is a convex cone, $-w_{i_0}^\nu\in \mathscr{N}^<_{\theta_\nu}(x)$, hence
	\[
	w_{i_0}^\nu\in \mathscr{N}^<_{\theta_\nu}(x)\cap -\mathscr{N}^<_{\theta_\nu}(x).
	\]
	However, since $L^<_{\theta_\nu}(x)$ is open, Lemma~\ref{L0} implies that $w_{i_0}=0$, a contradiction. The proposition follows.
\end{proof}

Proposition~\ref{P1} is strongly related to~\cite[Theorem 3.1]{Aussel-Dutta}. 
However, the authors considered  continuity instead of upper pseudocontinuity. 

\begin{proposition}\label{P2}
	Assume that $\mathfrak{X}$ is convex, compact and non-empty and let $T$ be defined as in~\eqref{T}. If for all $\nu\in N$ the following hold
	\begin{enumerate}
		\item the function $\theta_\nu$ is lower pseudocontinuous on its own player's variable and continuous with respect to its rivals' variables,
		\item the function $\theta_\nu$ is quasiconvex with respect to its own player's variable;
	\end{enumerate}
	then $S(T,\mathfrak{X})$ is non-empty.
\end{proposition}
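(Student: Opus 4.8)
The plan is to identify $S(T,\mathfrak X)$ with the solution set of a set-valued variational inequality over the compact convex set $\mathfrak X$ and to apply a Hartman--Stampacchia-type existence theorem. Two facts are needed: that $\mathfrak X$ is nonempty, compact and convex (our standing hypothesis), and that the restriction $T|_{\mathfrak X}$ is upper semicontinuous with nonempty, compact and convex values.

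First I would record the relevant properties of $T$. As noted right after \eqref{T}, each $D_\nu(x)$, and hence $T(x)=\prod_{\nu\in N}D_\nu(x)$, is compact and convex; moreover $D_\nu(x)\subset\conv\big(S_\nu[0,1]\big)$, which is the closed unit ball of $\R^{n_\nu}$, so every value of $T$ is contained in the fixed compact set $B:=\prod_{\nu\in N}\conv(S_\nu[0,1])$. Hypothesis 1 lets me invoke Proposition~\ref{closed-T}(1) to get that $T$ is closed, and hypothesis 2 together with Remark~\ref{rem:finite} (we work in $\R^n$) lets me invoke Proposition~\ref{closed-T}(2) to get that $T$ is nonempty-valued. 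Since a set-valued map whose graph is closed and whose values lie in a fixed compact set is upper semicontinuous with compact values, $T|_{\mathfrak X}:\mathfrak X\tos B$ is upper semicontinuous and nonempty, compact, convex valued.

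I would then conclude by the classical existence result for set-valued variational inequalities: if $K\subset\R^n$ is nonempty compact convex and $F:K\tos\R^n$ is upper semicontinuous with nonempty compact convex values, then there exist $\bar x\in K$ and $w_0\in F(\bar x)$ such that $\inner{w_0}{y-\bar x}\ge 0$ for all $y\in K$. Applied to $K=\mathfrak X$ and $F=T|_{\mathfrak X}$, this yields $\bar x\in\mathfrak X$ and $w_0\in T(\bar x)$ with $\inner{w_0}{y-\bar x}\ge 0$ for all $y\in\mathfrak X$, i.e. $\bar x\in S(T,\mathfrak X)$. A self-contained derivation of this existence result would run as follows: the Fan--KKM lemma applied to the map $y\mapsto\{x\in\mathfrak X:\exists\,w\in T(x),\ \inner{w}{y-x}\ge 0\}$ (whose values are closed because the values of $T$ are uniformly bounded and $T$ is closed, and which has the KKM property since $T(x)\ne\emptyset$ and $\inner{w}{x-x}=0$) produces a point $\bar x$ such that for each $y\in\mathfrak X$ there is some $w_y\in T(\bar x)$ with $\inner{w_y}{y-\bar x}\ge 0$; then a minimax theorem (von Neumann/Sion) applied to the bilinear map $(w,y)\mapsto\inner{w}{y-\bar x}$ on the compact convex sets $T(\bar x)$ and $\mathfrak X$ replaces this family by a single $w_0\in T(\bar x)$ valid for every $y$.

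The step I expect to be the main obstacle is the upgrade from closedness of $T$, which is all that Proposition~\ref{closed-T} delivers directly, to upper semicontinuity of $T|_{\mathfrak X}$ (or, in the self-contained route, closedness of the Fan--KKM map): this rests on the small but indispensable observation that the values of $T$ are uniformly bounded, being convex hulls of subsets of unit spheres. A secondary point worth flagging is that the argument is genuinely finite-dimensional, since nonemptiness of the values of $T$ passes through Remark~\ref{rem:finite}.
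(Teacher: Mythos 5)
Your proof is correct and follows essentially the same route as the paper: invoke Proposition~\ref{closed-T} to get that $T$ is closed with nonempty, compact, convex values, upgrade closedness to upper semicontinuity via the uniform boundedness of the values, and conclude with the classical existence theorem for set-valued variational inequalities on a compact convex set (the paper cites Aubin's Theorem 9.9 for this last step). Your explicit remark that the values lie in the unit ball, which the paper leaves implicit in the phrase ``Thus, $T$ is upper semicontinuous,'' is a welcome clarification.
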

\begin{proof}
	Thanks to Proposition \ref{closed-T}, the map $T$ is closed with convex, compact and nonempty values. Thus, $T$ is upper semicontinuous. Finally, the result follows from 
	\cite[Theorem 9.9]{JP-Aubin-1998}.
\end{proof}

Finally, we are ready for our main result, which establishes the existence of solution for jointly convex GNEPs.

\begin{theorem} \label{P-theo}
	Assume that $\mathfrak{X}$ is convex, compact and non-empty. If for all $\nu\in N$ the following hold 
	\begin{enumerate}
		\item the function $\theta_\nu$ is pseudocontinuous on its own player's variable and continuous on its rivals' variables,
		\item the function $\theta_\nu$ is quasiconvex on its own player's variable,
	\end{enumerate}
	then there exists a generalized Nash equilibrium.
\end{theorem}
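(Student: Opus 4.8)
The plan is simply to combine the two preceding propositions, reading ``pseudocontinuous'' as the conjunction of lower and upper pseudocontinuity. First I would observe that hypothesis~(1) of the theorem supplies, in particular, the lower pseudocontinuity of each $\theta_\nu$ on its own player's variable together with continuity on the rivals' variables, while hypothesis~(2) supplies quasiconvexity on the player's own variable. Since $\mathfrak{X}$ is assumed convex, compact and non-empty, these are exactly the hypotheses of Proposition~\ref{P2}, applied to the map $T$ defined in~\eqref{T}. Hence $S(T,\mathfrak{X})\neq\emptyset$; fix any $x\in S(T,\mathfrak{X})$.

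Next I would invoke Proposition~\ref{P1}. Its only requirement, beyond non-emptiness of $\mathfrak{X}$, is that each loss function $\theta_\nu$ be upper pseudocontinuous on its own player's variable — and this too is contained in hypothesis~(1). Consequently every point of $S(T,\mathfrak{X})$ is a generalized Nash equilibrium; in particular the point $x$ selected above is one. This establishes the existence claim.

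I do not expect a genuine obstacle here: the statement is the synthesis of the whole chain built in Section~\ref{definitions} and Section~\ref{main-section} (the lower semicontinuity of $L^<_f$ in Lemma~\ref{L1}, the closedness transfer in Lemma~\ref{L2}, the properties of the strict normal operator in Proposition~\ref{Prop1}, the closedness and non-emptiness of $T$ in Proposition~\ref{closed-T}, and finally the variational-inequality reformulation and existence step through \cite[Theorem 9.9]{JP-Aubin-1998} in Propositions~\ref{P2} and~\ref{P1}). The single point worth stating explicitly in the write-up is that ``pseudocontinuous'' must be interpreted as ``both lower and upper pseudocontinuous'', so that the hypotheses of Proposition~\ref{P2} and of Proposition~\ref{P1} hold simultaneously; with that reading the argument is a two-line deduction, and it is noteworthy that the assumption on $\theta_\nu$ is merely quasiconvexity, making the result independent of \cite[Theorem 3.1]{Aussel-Dutta}, which used semistrict quasiconvexity.
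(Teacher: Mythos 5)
Your proposal is correct and is exactly the paper's own argument: the paper proves Theorem~\ref{P-theo} in one line as a consequence of Propositions~\ref{P1} and~\ref{P2}, with ``pseudocontinuous'' understood as the conjunction of lower and upper pseudocontinuity, just as you spell out. No gap.
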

\begin{proof}
	It is  a consequence of Propositions \ref{P1} and \ref{P2}.
\end{proof}

\begin{remark} Theorem~\ref{P-theo} is not a consequence of \cite[Theorem 2]{Arrow-Debreu}. In fact, Theorem~\ref{P-theo} improves \cite[Theorem 4.2]{Aussel-Dutta}, \cite[Theorem 2.1]{FACCHINEI2007159}, \cite[Theorem 1]{Rosen} and \cite[Theorem 1]{Dansupta}\end{remark}

The following example shows that we cannot drop the continuity of each loss function with respect to its rivals' variables.
\begin{example}
Given the functions $\theta_1,\theta_2:\R^2\to\R$  defined as
\[
\theta_1(x^1,x^2):=\left\lbrace\begin{matrix}
	\left(x^1-\frac{\sqrt{2}}{2}\right)^2,&x^2\in\Q\\
	(2x^1-x^2)^2,&x^2\notin \Q
\end{matrix}\right.\mbox{ and }\theta_2(x^1,x^2):=\left\lbrace
\begin{matrix}
	\left(x^2-x^1\right)^2,&x^1\notin\Q\\
	\left(2x^2-x^1\right)^2,&x^1\in\Q
\end{matrix}\right.
\]
Clearly, each loss function is convex and continuous with respect to its own variable. 
For $X=[0,1]^2$, the GNEP reduces to the classic Nash equilibrium problem.  Furthermore, it is no difficult to see that there is not solution to this GNEP.
\end{example}
We must note that the previous example directly contradicts \cite[Corollary 4.3]{SN10}.


\end{document}